\providecommand{\keywords}[1]
{
	\small	
	\textbf{\textit{Keywords---}} #1
}
\providecommand{\subjclass}[1]
{
	\small	
	\textbf{\textit{AMS Subject Classification.}} #1
}
\newtheorem{theorem}{Theorem}
\newtheorem{lemma}{Lemma}
\theoremstyle{definition}
\theoremstyle{remark}
\newtheorem{remark}{Remark}
\newcommand{\cL}{{\mathcal L}}
\date{}
\begin{document}

\title{ \textbf {Uniqueness of a solution to a general class of  discrete system defined on connected graphs}}

\author[1]{Avetik Arakelyan
	\thanks{email: \texttt{arakelyanavetik@gmail.com}}
}
\author[2]{Farid Bozorgnia
	\thanks{email: \texttt{farid.bozorgnia@tecnico.ulisboa.pt}}
}

% \texttt{keywords: Free boundary, graph theory, Reaction-diffusion}
\affil[1]{Institute of Mathematics, NAS of Armenia, 0019 Yerevan, Armenia}
\affil[1]{TenWeb IO, LLC, Gyulbenkyan 30/3, apt 141, 0033 Yerevan, Armenia}
\affil[2]{CAMGSD, Dept. of Mathematics, Instituto Superior T\'{e}cnico, Lisbon, Portugal}

%\subjclass[2000]{}

\maketitle

\begin{abstract}
	In this work we prove uniqueness result for an implicit discrete system defined on connected graphs.
	Our discrete system is motivated from a certain class of spatial segregation of reaction-diffusion equations.
	
\end{abstract}
\[\]
\subjclass{35R35, 65N06, 05C12, 05C69}\\
\keywords{ Graph theory, Reaction-diffusion system, Free boundary problems, Uniqueness}

%\tableofcontents

\section{Introduction}

In recent years there have been intense studies of spatial segregation for reaction-diffusion systems. The existence and uniqueness of spatially inhomogeneous solutions for competition models of Lotka-Volterra type in the case of two and more competing densities  have been investigated  in
\cite{Av-Farid-unique, MR2146353,MR2151234, MR2300320, MR1900331, MR1459414, MR2417905}.
The aforementioned segregation problems led to an interesting class of  multi-phase free boundary problems. These   problems have growing interest due to their  important applications in the different branches of applied mathematics. To see the diversity of applications we refer \cite{Avet-Henrik,bucur-multi,Aram-Caff} and the references therein.

In a recent series of papers the authors consider numerical approach to a certain class of spatial segregation problem (see \cite{Av-conv, Av-Raf-2016, MR2563520, Mywork}). It was developed certain finite difference scheme and proved its solution existence, uniqueness and convergence. In the present work the author concerns to generalize the developed difference scheme for more complicated domains, namely for connected graphs. In short, our work is devoted to generalize and prove uniqueness of competing densities configuration over connected graphs.

Our motivation for current work is    segregation model and Dirichlet partition problem  which we will briefly explain in the following section for both continuum and discrete cases.  Nonlocal PDEs on graphs have recently received a lot of attention due to their applications in the real world. Assume we are give a data set and the aim   is to  identifying meaningful  clusters.  One way to  perform  the
clustering   is to construct a weighted graph, $G = (V,E,W),$   where the vertices $V$ represent
the data   to be clustered, $E$ is a set of edges and the weights $W$ indicates the  similarity between these data  and is defined  on the
edges.  To partition the resulting graph,   different  methods have been investigated  e.g., spectral clustering
\cite{Lux}, Dirichlet partitioning \cite{Zos},   minimizing graph cuts \cite{Gar}. Note that the continuum and discrete problems should be consistent means if both  problems of interests are stated in the variational form then the minimizer of discrete problem converge (in the appropriate sense) to the
minimizer of continuous  problem in the large sample limit as $ n$ tends to infinity.

Let  $G=(V,E)$ be a given connected graph. The aim of this work is to  prove uniqueness of the solution of following general system (we provide suitable assumptions on   $H$ and $f_{l}$ in Subsection $1.4$)
\begin{equation}\label{scheme_sys_intr}
\begin{cases}
u^{l}(x) =\max \left(H\left(x, \overline{u}^l(x) - \sum\limits_{p \neq l}  \overline{u}^p(x)\right)-f_{l}(x,u^{l}(x)) , \,  0\right),\;\;x\in G^o,\\
u^{l}(x) =\phi^{l}(x),\;\;x\in \partial G.
\end{cases}
\end{equation}
for every $l=1,2,\dots,m.$ Here $\overline{u}^l(x)$ denotes the average of  ${u}^l$ at  vertices $x$ (the precise definition will be given in Subsection $1.3$).

\subsection{Mathematical background}

The first model  is  related to asymptotic behaviour  of  the following coupled system as parameter $ \varepsilon$ tends to zero.

 \begin{align}\label{f20}
\begin{cases}
\Delta  u_{i}^{\varepsilon}=   \frac{ u_{i}^{\varepsilon}}{\varepsilon}  \sum\limits_{j \neq i}   u_{j}^{\varepsilon} (x)\qquad\qquad & \text{ in  } \Omega,\\
u_{i}^{\varepsilon} \ge 0\;  & \text{ in  } \Omega,\\
u^{\varepsilon}_{i}(x) =\phi_{i}(x)    &   \text{ on} \,  \partial \Omega,\\
  \end{cases}
\end{align}
for  $i=1,\cdots, m.$   The boundary values  satisfy
\[
\phi_{i}(x) \cdot \phi_{j}(x)=0,  \quad i \neq j \textrm{ on the boundary}.
\]
First, for each fixed  $\varepsilon $ the exist   unique positive solution  $ (u_{1}^{\varepsilon},\cdots   ,u_{m}^{\varepsilon})$. Next  by construction barrier functions, one can show that the normal derivative of $ u_{i}^{\varepsilon}$  is bounded independent of  $ \varepsilon$, this consequently proves that the  $H^1$-norm of  $ u_{i}^{\varepsilon}$ is bounded.   Let $ (u_1, \cdots  ,u_m)$ be the limiting configuration, then $u_i$ are pairwise segregated, i.e., $u_{i}(x)\cdot u_{j}(x)=0,$  each $u_i$   is harmonic in their supports and satisfy the following differential inequalities \cite{Aram-Caff,MR2146353}

 \begin{itemize}
\item  $ -\Delta u_{i} \ge 0$,\\
\item  $ -\Delta (u_{i}- \sum\limits_{j\neq i}u_j )\le 0$,
 %\item  Let $x$  belongs to interface    then
%        \[
%   \underset{ y\rightarrow x} {\text{ lim}} \ \nabla u_{i}(y)=-
%\underset{ y\rightarrow x}{ \text{ lim}} \  \nabla u_{j}(y)\quad \text{Free } \, \text{boundary}\, \text{condition}.
   %\]
 \end{itemize}
 It has been shown \cite{MR2151234} that the limiting solution   $ (u_1, \cdots  ,u_m)$      minimizes the following functional
\begin{equation}\label{continuous-model}
\left\{
\begin{split}
&\min J(u):=\frac{1}{2}\int_\Omega\sum\limits_{i=1}^m|\nabla u_i|^2\, dx\\
&\text{subject to } u_i=\phi_i,\quad  \text{ on } \partial\Omega, \\[10pt]
&\qquad u_i\geq0, \quad\text{ and }\quad u_i\cdot u_j=0\quad\text{ in }\Omega,
\end{split}\right.
\end{equation}

 In  \cite{Mywork,Av-Raf-2016}, the authors have proposed the following iterative  scheme to solving \eqref{continuous-model} based on these properties:
 \begin{equation}\label{iteration}
u_{i}^{t+1} (x) =\max\Big(\overline{u}^{t}_{i}(x)-\sum\limits_{j\neq i}\overline{u}^{t}_{j}(x),\,  0\Big)\quad i=1\cdots m,
\end{equation}
where  $\overline{u}(x)$ denotes the average of values of  $u$  for neighboring points of the  mesh point $x,$  and $t$ refers to iterations.%where $\bar v := \dashint_\Omega v$ and \rred{explain about the numeric method}

 We would like to mention  that the scheme (\ref{iteration}) is implemented  to develop a new graph based semi-supervised algorithm for classification (see \cite{semi-superviser}).

 Our second example     is optimal partition  of first eigenvalue of Laplace operator which has application in data analysis. Given a bounded open set
$\Omega  \subset  \mathbb{R}^d,$  an  $m$-partitions of $\Omega$ is   a family of pairwise  disjoint, open and connected  subsets   ${\{\Omega_{i}}\}_{i=1}^{m}$
 such that
 \[
\overline{\Omega}_{1} \cup\overline{\Omega}_{2} \cup \cdots  \cup \overline{\Omega}_{m} =  \overline{\Omega}, \, \Omega_{i}\cap \Omega_{j}=\emptyset \quad \text{for } i\neq j.
 \]
 By $\mathfrak{D}_{m}$ we mean the set of all $m$-partitions of $\Omega$.    For  an   arbitrary partition
 $\mathfrak{D}= (\Omega_{1},\cdots ,\Omega_{m}) \in \mathfrak{D}_{m},$  we consider  the  following  minimization of domain functional
\begin{equation}\label{1}
\mathfrak{L_{m}} (\Omega)=\underset{\mathfrak{D} \in \mathfrak{D_m}}{\inf}  \frac{1}{m}  \sum_{i=1}^{m} \lambda_{1}(\Omega_{i}).
\end{equation}
Such $m$-partitions of $\Omega$ for which the minimum is realized are   called  an  optimal partitions. The first eigenvalue has a variational characterization,
 so  the  minimization problem in (\ref{1})  can be written  in term of  energies  as
    \begin{equation}\label{2}
\text{Minimize  }  E(u_1, \cdots, u_m)=\int_{\Omega}  \sum_{i=1}^{m}  \frac{| \nabla u_{i}|^{2}}{2}  \,  dx,
\end{equation}
  over the set
  $${\{(u_1,\dots,u_{n})\in (H^{1}_{0}    (\Omega))^{m } :  \|u_{i} \|_{L^{2}(\Omega)}=1, \,  u_{i} \cdot u_{j}=0, \text{ for } i\neq j }\}.$$
In  this case the optimal partitions  are  the supports of $u_{i},$ i.e.,
$$\Omega_{i}={\{x \in \Omega: u_{i}(x)> 0}\}.$$

Existence of optimal partitions for Problem  (\ref{1})  in the class of quasi-open sets  has been studied in
\cite{66}.  In
  \cite{77}    Conti, Terracini, and Verzini  studied regularity and qualitative properties  of the optimal
 partitions.    A numerical method to    approximate  partitions of a domain minimizing the sum of Dirichlet
 Laplacian eigenvalues   is given in \cite{farid-optimal-partition}, where  the author   consider   the system of differential inequalities.
 Using this result, they build   a numerical algorithm to approximate optimal
configurations.

     To penalize  the restriction $ u_{i} \cdot u_{j}=0$ we add the p term  $\frac{1}{\epsilon} u_{i}^2 u_{j}^{2}$ to the functional
with  $\epsilon$ tends to zero. Thus  we rewrite (\ref{2}) as
 \begin{equation}\label{722}
\text{  Minimize  } \int_{\Omega}  \sum_{i=1}^{n}  \frac{| \nabla u_{i}|^{2}}{2}+  \sum_{ j\neq i } \frac{1}{\epsilon} u_{i}^2 u_{j}^{2}    \,  dx,
\end{equation}
  over the set
  $${\{(u_1,\dots,u_{n})\in (H^{1}_{0}    (\Omega))^{n } :\int_{\Omega} u_{i}^{2} \, dx=1 }\}.$$
The minimizer $(u_1, \cdots, u_n)$ of (\ref{722}) satisfies the following  system
\begin{equation}\label{333}
\left \{
\begin{array}{lll}
 - \Delta u_{i}=  \lambda_{1}(\Omega_{i}) u_{i}- \frac{u_{i}}{\epsilon}   \sum_{ j\neq i }   u_{j}^{2}    &  \text{in }\,  \Omega \\
 u_{i} =0        & \text{on }    \partial  \Omega\\
\|u_{i}\|_{L^{2}}=1  \quad\\
 i=1,\cdots,n .
  \end{array}
\right.
\end{equation}

In  discrete case,  the Dirichlet  partition
problem is as following.  As the setting before let   $ G = (V, W)$   be  a weighted graph.      The weighted Dirichlet energy of a function  $ u: V\rightarrow \mathbb{R}$ is  given by

\begin{equation}\label{objective}
 I_{n}(u):=\frac{1}{2}\sum\limits_{i=1}^{n}\sum\limits_{j=1}^{n} w_{ij} (u(x_{i}) - u(x_{j}))^{2}.\\
\end{equation}

For any  subset   $ S\subset V$,  the first eigenvalue is defined as
\[
\lambda_{1}(S)= \underset{v|_{S^{c}}=0, \|v\|_{2}=1}{\textrm{min}} I(v)
\]
 For fixed $m$, the discrete Dirichlet $m$-partition problem on $G$ is then to choose disjoint subsets
$V_{1}, V_{2}, \cdots  V_m $   minimizing
\[
 \sum_{i=1}^{m} \lambda_{1}(V_{i}).
\]

\subsection{Connected graphs}
In this section for the sake of readers convenience we give a brief introduction to connected graphs.
A graph (sometimes called undirected graph for distinguishing from a directed graph) is a pair $G = (V, E),$ where $V$ is a set whose elements are called vertices (singular: vertex), and $E$ is a set of paired vertices, whose elements are called edges (sometimes links or lines).

The vertices $x$ and $y$ of an edge $(x, y)\in E$ are called the endpoints of the edge. The edge is said to join $x$ and $y$ and to be incident on $x$ and $y.$ A vertex may belong to no edge, in which case it is not joined to any other vertex.

In an undirected graph $G,$ two vertices $x$ and $y$ are called connected if $G$ contains a path from $x$ to $y.$ Otherwise, they are called disconnected. If the two vertices are additionally connected by a path of length 1, i.e. by a single edge, the vertices are called adjacent.

A graph is said to be \textbf{connected} if every pair of vertices in the graph is connected. This means that there is a path between every pair of vertices. An undirected graph that is not connected is called disconnected. An undirected graph $G$ is therefore disconnected if there exist two vertices in $G$ such that no path in $G$ has these vertices as endpoints. A graph with just one vertex is connected. An edgeless graph with two or more vertices is disconnected. For more detailed study we refer to \cite{Loebl2010} and references therein.

\subsection{Notations}
In this section we give  notations for the further exposition of the paper. First, we introduce the boundary notion for the connected graphs following the recent work by Steinerberger \cite{graph-boundary}. The author gives a definition, which coincides with what one would expected for the discretization of
(sufficiently nice) Euclidean domains. He also shows an isoperimetric inequality similar to the ones for regular Euclidean domains. Following \cite{graph-boundary}  we set the boundary of a given connected graph $G$ as follows:
\begin{equation}\label{boundary_def}
\partial G =\left\{x\in V\; \Big|\; \exists y\in V : \frac{1}{\deg(x)}\sum_{(x,z)\in E}d(z,y) < d(x,y)    \right\},
\end{equation}
where $G = (V,E),$ $d(x,y)$ is a distance between vertices $x$ and $y.$ By $\deg(x)$ we set the number of neighbor vertices close to $x.$ Note that $\deg(x) = |\{y\in V \;:\; (x,y)\in E\}|.$ Thus, we can easily define the notion of interior points of a graph $G$:
$$
G^o\equiv V\setminus\partial G.
$$
We again follow \cite{graph-boundary} to depict  in Figure $1$  different connected graphs and their boundaries according to a definition given in \eqref{boundary_def}.
\begin{remark}
We point out that there are other definitions of a boundary notion for graphs (see for instance \cite{chartrand-boundary}), but we consider the above definition due to its natural generalization of regular Euclidean domains boundary properties.
\end{remark}

\begin{figure}\label{figure}
	\centering
	\includegraphics[scale=0.17]{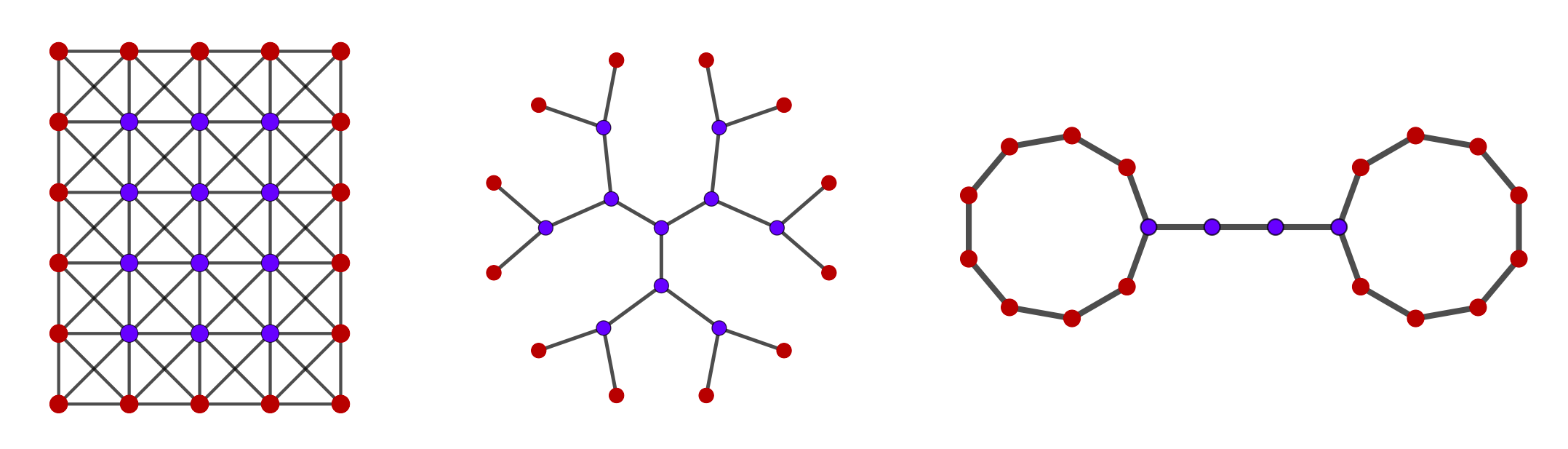}
	\includegraphics[scale=0.14]{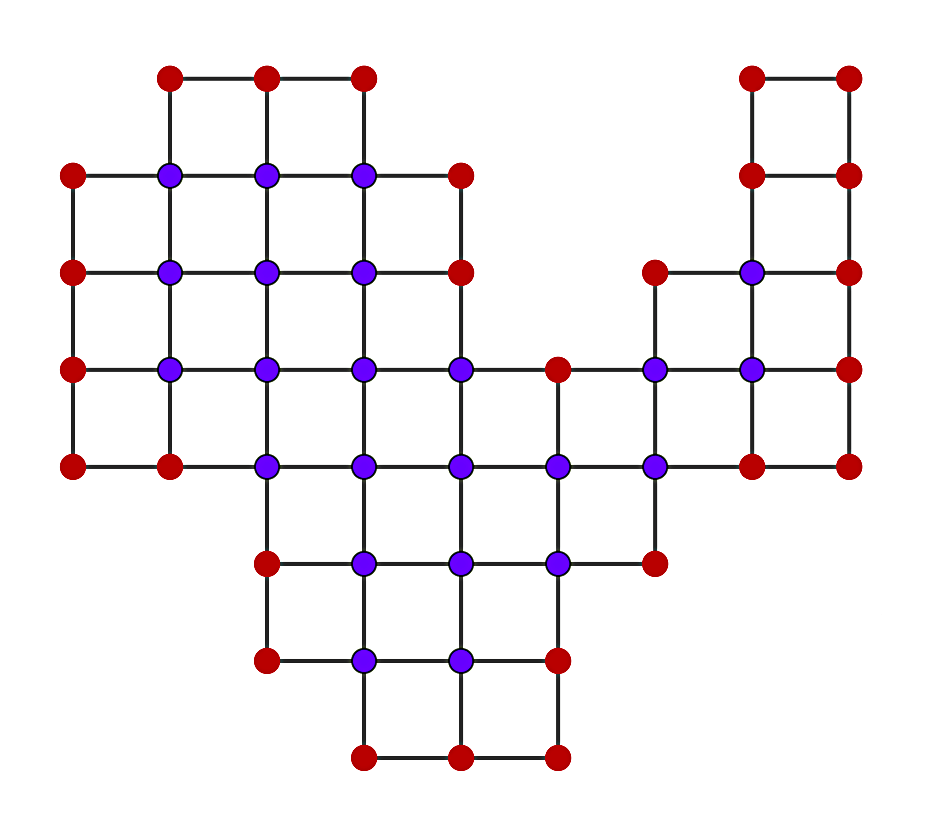}
	\caption{Graphs, their boundary $\partial G$ (red) and interior $V\setminus\partial G$ (blue).}	
\end{figure}
We define for a given function $r(x)$ the mean value around $x\in V$ as follows:
$$
\overline{r}(x) := \frac{1}{\deg(x)}\sum_{(x,y)\in E}r(y).
$$

We set by $u^l(x)$ the densities defined on $G,$ also we define  the functions $\phi^l(x),$ which are defined on $\partial G$ and assume they are  extended to be zero  on $G^o,$ for all $l=1,2,\dots, m$. We also impose disjointness condition on  $\phi^l(x),$ namely $\phi^i(x)\cdot\phi^j(x)=0,$ for every $i\neq j,$ and $x\in \partial G.$

According to the above defined densities we will need the following notation as well:
\[
\hat{u}^t(x):= {u}^t(x) - \sum\limits_{p \neq t}{u}^p(x).
\]

\subsection{The  setting of the problem and main result }
%In recent years there have been intense studies of spatial segregation for reaction-diffusion systems. The existence of spatially inhomogeneous solutions for competition models of Lotka-Volterra type in the case of two and more competing densities  have been considered in
%\cite{MR2146353,MR2151234,MR2283921,MR2300320,MR1900331,MR1459414, MR2417905}.
%The aforementioned segregation problems led to an interesting class of  multi-phase obstacle-like free boundary problems. These   problems have growing interest due to their  important applications in the different branches of applied mathematics. To see the diversity of applications we refer \cite{Avet-Henrik,bucur-multi,Aram-Caff} and the references therein.
Assume $G=(V,E).$
Let $H(x,s):V\times\mathbb{R}^+\to\mathbb{R}$ and $f_i(x,s):V\times\mathbb{R}^+\to\mathbb{R}$ are continuous in $s,$ and $H(x,0)\equiv0, f_i(x,0)\equiv0.$
We focus on the following system:
\begin{equation}\label{scheme_sys}
\begin{cases}
u^{1}(x) =\max \left( H\left(x, \overline{u}^1(x) - \sum\limits_{p \neq 1}  \overline{u}^p(x)\right)-f_{1}(x,u^{1}(x)) , \,  0\right),\;\;x\in G^o,\\
u^{2}(x) =\max \left(H\left(x, \overline{u}^2(x) - \sum\limits_{p \neq 2}  \overline{u}^p(x)\right)-f_{2}(x,u^{2}(x)) , \,  0\right),\;\;x\in G^o,\\
\dots\dots\dots\dots\\
u^{m}(x) =\max \left(H\left(x, \overline{u}^m(x) - \sum\limits_{p \neq m}  \overline{u}^p(x)\right)-f_{m}(x,u^{m}(x)) , \,  0\right),\;\;x\in G^o,\\
u^{l}(x) =\phi^{l}(x),\;\;x\in \partial G.
\end{cases}
\end{equation}
for every $l=1,2,\dots,m.$
\begin{remark}
	Functions $f_i$'s are defined only for non negative values of s (recall that  densities $u_i$'s are non negative due to the above system \eqref{scheme_sys}); thus we can arbitrarily define such functions on the negative semiaxis. For the sake of convenience, when $s\leq 0$,  we will let $f_i(x,s)=-f_i(x,-s)$. This extension  preserves the continuity due to the conditions on $f_i$ defined above. In the same way, $H$ is extended on the negative semiaxis as well, i.e $H(x,s)=-H(x,-s).$
\end{remark}

As special case  of (\ref{scheme_sys}) let $m=1$ and $f(x)$ be non negative  function defined on $V$.  The one phase obstacle problem on  graph reads as
\begin{equation}\label{Onephase}
\left \{
\begin{array}{ll}
\min(-\cL u(x) + f(x), u)  =0,  &  x\in G^o,\\
  u=g,   &   \text{ on } \partial G,\\
  \end{array}
\right.
\end{equation}
 where $\cL$ is the unnormalized graph Laplacian given by
  \begin{equation*}
 \mathcal{L} u(x_i)=\sum\limits_{j=1}^{n} \, (u(x_{j}) - u(x_i)).
 \end{equation*}
 To solve (\ref{Onephase}) one can rewrite  the obstacle problem in the following form
 \[
 u(x)=\max \left( \overline{u}(x)- \frac{1}{\deg(x)} f(x), 0 \right).
 \]

The main result of the paper reads as follows:
\begin{theorem}\label{scheme_unique}
	Let the functions $f_{l}(x,s)$ and $H(x,s)$ be nondecreasing with respect to the variable $s$. Assume also that $H$ is a Lipschitz continuous with respect to $s$ and Lipschitz constant is $1,$ i.e. $|H(\cdot,s)-H(\cdot,t)|\leq |s-t|, \forall s,t\in\mathbb{R}^+.$ Then
	there exists  a unique vector  $(u^{1},u^{2},\dots,u^{m}),$ which
  satisfies the discrete system \eqref{scheme_sys}.
\end{theorem}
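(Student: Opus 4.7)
The plan is a proof by contradiction via a discrete maximum-principle propagation argument together with the connectedness of $G$. Suppose $(u^1,\dots,u^m)$ and $(v^1,\dots,v^m)$ both solve \eqref{scheme_sys}, and set $w^l:=u^l-v^l$. Since the system is symmetric under the swap $(u^l)\leftrightarrow(v^l)$, it suffices to rule out $M:=\max_{l,\,x\in V}w^l(x)>0$; the same argument applied to $-w^l$ then forces $w^l\equiv 0$. Equal boundary data give $w^l\equiv 0$ on $\partial G$, so if $M>0$ the maximum is attained at some $(l^*,x^*)$ with $x^*\in G^o$; since $v^{l^*}(x^*)\ge 0$ we have $u^{l^*}(x^*)>0$, so the $\max(\cdot,0)$ in the $l^*$-equation is resolved by its interior branch.

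Combining the identity for $u^{l^*}(x^*)$ with the inequality $v^{l^*}(x^*)\ge H(x^*,\bar v^{l^*}(x^*)-\sum_{p\neq l^*}\bar v^p(x^*))-f_{l^*}(x^*,v^{l^*}(x^*))$ coming from the $\max$ in the $v^{l^*}$-equation, and subtracting, I discard the $f$-difference (nonnegative by monotonicity of $f_{l^*}$ together with $u^{l^*}(x^*)>v^{l^*}(x^*)$) and convert the $H$-difference via monotonicity and the $1$-Lipschitz hypothesis on $H$, reaching the key inequality
\[
M\;\le\;\bar w^{l^*}(x^*)-\sum_{p\neq l^*}\bar w^p(x^*).
\]
The discrete max principle gives $\bar w^{l^*}(x^*)\le M$, whence $\sum_{p\neq l^*}\bar w^p(x^*)\le 0$.

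What remains is to upgrade $\bar w^{l^*}(x^*)\le M$ to the equality $\bar w^{l^*}(x^*)=M$: once this is secured, $w^{l^*}(y)=M$ at every neighbour $y$ of $x^*$, and iterating along the edges---which connect $x^*$ to $\partial G$ because $G$ is connected---propagates $w^{l^*}\equiv M$ to the boundary, where $w^{l^*}=0$, yielding the contradiction. This equality upgrade is the main obstacle: one has to rule out a strict slack in which some $\bar w^p(x^*)<0$ for $p\neq l^*$ absorbs the deficit $M-\bar w^{l^*}(x^*)$. My plan for handling this is to run the symmetric argument at $N:=\min_{l,\,x\in V}w^l(x)$, producing a dual extremum $(l^{**},x^{**})$ with $v^{l^{**}}(x^{**})>u^{l^{**}}(x^{**})$ and an analogous chain of inequalities; cross-comparing the two chains and exploiting the disjointness $\phi^i\cdot\phi^j=0$ on $\partial G$, any strict slack at one extremum must force a strict slack at the other, so that an induction on shortest paths from the extrema to $\partial G$ reaches a boundary vertex at which the equal boundary data contradict the propagated strict inequality.
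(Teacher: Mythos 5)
Your first step is sound and matches the opening move of the paper's argument: at a maximizer of $w^{l^*}=u^{l^*}-v^{l^*}$ the $\max(\cdot,0)$ resolves to its interior branch, the $f$-difference can be discarded by monotonicity, and the $1$-Lipschitz monotone $H$ yields $M\le \bar w^{l^*}(x^*)-\sum_{p\neq l^*}\bar w^p(x^*)$. You also correctly identify the crux: this right-hand side is \emph{not} $\bar w^{l^*}(x^*)$, so the discrete maximum principle does not close, and a strict slack $\sum_{p\neq l^*}\bar w^p(x^*)<0$ is not excluded. But your plan for closing this gap — running the dual argument at the minimum and ``cross-comparing the two chains'' by an induction on shortest paths — is not a proof; it is a restatement of the difficulty. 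In particular, the quantity you hope to propagate, $w^{l^*}\equiv M$ along a path to $\partial G$, is generally false: at a neighbour $y$ of $x^*$ one may have $u^{l^*}(y)=v^{l^*}(y)=0$ with the excess carried entirely by the \emph{other} components, so the raw difference $w^{l^*}$ drops to $0$ there while nothing contradictory has happened.

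The device that actually closes the argument in the paper is to change variables to $\hat u^l:=u^l-\sum_{p\neq l}u^p$ and study $\hat u^l-\hat v^l$: your right-hand side is exactly $\overline{(\hat u^{l^*}-\hat v^{l^*})}(x^*)$, so for the \emph{hatted} difference one gets a genuine discrete subharmonicity inequality and hence equality at all neighbours of a maximizer. Making this work requires several ingredients you do not supply: (i) interior disjointness $u^iu^j\equiv 0$ on all of $V$ (the paper's Lemma \ref{lemma_disj}; you only invoke $\phi^i\phi^j=0$ on $\partial G$); (ii) a one-sided version of the equation, $H(x,\overline{\hat u}^l(x))-\hat u^l(x)\le f_l(x,u^l(x))$, valid at \emph{every} vertex including those where $u^l=0$ (Lemma \ref{lemma_ineq}), which is what lets the comparison run when the interior branch is not active; and (iii) a propagation step in which, when the active component dies at a vertex along the path, one switches to a different index $t_0$ (and the sign of the propagated constant flips), using $A=B$ from Lemma \ref{lemma2}. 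The invariant that reaches the boundary is $|\hat u^{l_{y_j}}(y_j)-\hat v^{l_{y_j}}(y_j)|=A>0$ for a vertex-dependent index $l_{y_j}$, not constancy of a single $w^{l^*}$. As it stands, your proposal stops short of the essential construction, so there is a genuine gap.
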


\section{Auxiliary lemmas}

We start this section by proving that the disjointness condition on $\partial G$ implies the same property for the densities $u^l(x)$ over whole graph $G.$ Let $G=(V,E),$ where $V$ is a set of vertices and $E$ is a set of edges to a given graph $G.$
\begin{lemma}\label{lemma_disj}
	Let the functions $f_{l}(x,s)$ and $H(x,s)$ be nondecreasing with respect to the variable $s$.   If a vector $(u^{1},u^{2},\dots,u^{m})$ solves the discrete system \eqref{scheme_sys}, then the following property holds:
	\[
	u^i(x)\cdot u^j(x)= 0,\; \forall x\in V,\; i\neq j.
	\]
\end{lemma}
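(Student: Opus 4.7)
The plan is to argue by contradiction: suppose there exist $x_0 \in V$ and indices $i \neq j$ with $u^i(x_0) > 0$ and $u^j(x_0) > 0$. The boundary case is handled immediately, since for $x_0 \in \partial G$ we have $u^l(x_0) = \phi^l(x_0)$ and the disjointness of the boundary data gives $\phi^i(x_0)\cdot\phi^j(x_0) = 0$, contradicting the assumption. So we may assume $x_0 \in G^o$.

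Next, I would exploit the $\max$ structure of \eqref{scheme_sys}. Since $u^i(x_0) > 0$, the maximum in the $i$-th equation must be attained by the first argument, which therefore is strictly positive:
\[
H\!\left(x_0,\, \overline{u}^i(x_0) - \sum_{p \neq i}\overline{u}^p(x_0)\right) > f_i(x_0, u^i(x_0)).
\]
Because $f_i(x_0, \cdot)$ is nondecreasing with $f_i(x_0, 0) = 0$ and $u^i(x_0) > 0$, the right-hand side is $\geq 0$. Hence $H(x_0, \overline{u}^i(x_0) - \sum_{p\neq i} \overline{u}^p(x_0)) > 0 = H(x_0, 0)$, and the monotonicity of $H(x_0, \cdot)$ forces
\[
\overline{u}^i(x_0) > \sum_{p \neq i} \overline{u}^p(x_0).
\]
The same reasoning applied to $u^j(x_0) > 0$ yields $\overline{u}^j(x_0) > \sum_{p \neq j} \overline{u}^p(x_0)$.

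Finally, I would add these two strict inequalities. Setting $S := \sum_p \overline{u}^p(x_0)$, I obtain
\[
\overline{u}^i(x_0) + \overline{u}^j(x_0) > (S - \overline{u}^i(x_0)) + (S - \overline{u}^j(x_0)),
\]
which simplifies to $\overline{u}^i(x_0) + \overline{u}^j(x_0) > S$, contradicting the fact that all $\overline{u}^p(x_0) \geq 0$ and therefore $S \geq \overline{u}^i(x_0) + \overline{u}^j(x_0)$. The contradiction gives $u^i(x)\cdot u^j(x) = 0$ everywhere.

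The only delicate step is the strictness in passing from $H(x_0, s) > H(x_0, 0) = 0$ to $s > 0$, which does use monotonicity in the strict direction at $0$; this is valid because if $s \leq 0$ then $H(x_0, s) \leq H(x_0, 0) = 0$ contradicts strict positivity. No Lipschitz hypothesis on $H$ is needed here, in line with the lemma's assumptions being weaker than those of Theorem \ref{scheme_unique}.
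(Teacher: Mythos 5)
Your proof is correct and follows essentially the same route as the paper: both hinge on the observation that $u^i(x_0)>0$ forces $H\left(x_0,\overline{u}^i(x_0)-\sum_{p\neq i}\overline{u}^p(x_0)\right)>f_i(x_0,u^i(x_0))\geq 0$, hence by monotonicity and $H(x,0)=0$ the argument of $H$ must be positive, a condition that cannot hold simultaneously for two indices since all averages are nonnegative. The only (cosmetic) difference is in the last step: the paper argues asymmetrically, showing that the $i$-th condition forces the $j$-th argument to be $\leq 0$ and hence the $j$-th maximum to vanish, whereas you sum the two strict inequalities to reach a contradiction.
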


\begin{proof}
	Let $u^i(x_0) > 0,$ for some $x_0\in G^o.$ Then
	$$
	u^i(x_0) =  H\left(x_0, \overline{u}^i(x_0) - \sum\limits_{p \neq i}  \overline{u}^p(x_0)\right)-f_{x_0}(x,u^{i}(x_0)) > 0.
	$$
	Therefore $ H\left(x_0, \overline{u}^i(x_0) - \sum\limits_{p \neq i}  \overline{u}^p(x_0)\right) > 0.$  This along with non-decreasing condition on $H$ and $H(x,0)=0,$ implies  $\overline{u}^i(x_0) - \sum\limits_{p \neq i}  \overline{u}^p(x_0)\geq0.$ Thus, for every $j\neq i$ we get $\overline{u}^j(x_0) - \sum\limits_{p \neq j}  \overline{u}^p(x_0)\leq 0.$
	This apparently yields
	$$
	H\left(x_0, \overline{u}^j(x_0) - \sum\limits_{p \neq j}  \overline{u}^p(x_0)\right)\leq H(x_0,0)=0,
	$$
	hence
	$$
	u^{j}(x_0) =\max \left( H\left(x_0, \overline{u}^j(x_0) - \sum\limits_{p \neq j}  \overline{u}^p(x_0)\right)-f_{j}(x_0,u^{j}(x_0)) , \,  0\right) = 0.
	$$
	This completes the proof.
	
\end{proof}

\begin{lemma}\label{lemma_ineq}
	Let the functions $f_{l}(x,s)$ and $H(x,s)$ be nondecreasing with respect to the variable $s$.   If  a vector $(u^{1},u^{2},\dots,u^{m})$ solves the discrete system \eqref{scheme_sys}, then the following inequality holds:
	\[
	H\left(x, \overline{u}^l(x) - \sum\limits_{p \neq l}  \overline{u}^p(x)\right)- \hat{u}^l(x)\leq f_{l}(x,u^{l}(x)),
	\]
	for all $l=1,2,...,m$ and $x\in V.$
\end{lemma}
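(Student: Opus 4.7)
The plan is to fix $x \in G^o$ (the interior is the only case where the scheme imposes a nontrivial constraint; on $\partial G$ the inequality is either vacuous or follows from the prescribed disjoint boundary data) and split on whether $u^l(x)$ is positive or zero, invoking Lemma \ref{lemma_disj} to control the other components. The main identity I would exploit is that the disjointness of the densities allows the ``competitor'' $\overline{u}^l(x)-\sum_{p\neq l}\overline{u}^p(x)$ to be compared with the analogous quantity for another index $j$, turning the obstacle case into an application of the non-obstacle case for $j$ together with the odd extension of $H$.

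In the first case, suppose $u^l(x)>0$. By Lemma \ref{lemma_disj}, $u^p(x)=0$ for every $p\neq l$, so $\hat u^l(x)=u^l(x)$, and the ``$\max$'' in \eqref{scheme_sys} is attained by the first argument, giving
\[
H\!\left(x,\overline{u}^l(x)-\sum_{p\neq l}\overline{u}^p(x)\right)-u^l(x)=f_l(x,u^l(x)),
\]
which is the desired inequality with equality. If instead $u^l(x)=0$ and $u^p(x)=0$ for every $p\neq l$, then $\hat u^l(x)=0$ and from the scheme together with $f_l(x,0)=0$ one directly obtains $H(x,\overline{u}^l(x)-\sum_{p\neq l}\overline{u}^p(x))\leq 0=f_l(x,u^l(x))$.

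The main obstacle is the remaining case: $u^l(x)=0$ but $u^j(x)>0$ for some index $j\neq l$, which is unique by Lemma \ref{lemma_disj}. Here $\hat u^l(x)=-u^j(x)$, and the inequality to prove becomes
\[
H\!\left(x,\overline{u}^l(x)-\sum_{p\neq l}\overline{u}^p(x)\right)\leq -u^j(x).
\]
The key algebraic observation is that, by direct computation,
\[
\left(\overline{u}^l(x)-\sum_{p\neq l}\overline{u}^p(x)\right)+\left(\overline{u}^j(x)-\sum_{p\neq j}\overline{u}^p(x)\right)=-2\sum_{p\neq l,j}\overline{u}^p(x)\leq 0,
\]
so the first argument of $H$ above is dominated by the negative of the analogous quantity for $j$. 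Combining with the monotonicity of $H$ and the odd extension $H(x,-s)=-H(x,s)$ introduced in Remark~1, I would deduce
\[
H\!\left(x,\overline{u}^l(x)-\sum_{p\neq l}\overline{u}^p(x)\right)\leq -H\!\left(x,\overline{u}^j(x)-\sum_{p\neq j}\overline{u}^p(x)\right).
\]
Finally, applying the first (equality) case to the positive component $u^j$ gives $H(x,\overline{u}^j(x)-\sum_{p\neq j}\overline{u}^p(x))=u^j(x)+f_j(x,u^j(x))\geq u^j(x)$, since $f_j(x,\cdot)$ is nondecreasing with $f_j(x,0)=0$. Chaining the two inequalities closes the bound.
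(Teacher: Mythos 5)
Your proof is correct and follows essentially the same route as the paper: the identical three-way case split on $u^l(x)$, the reduction of the only nontrivial case (where some $u^j(x)>0$, $j\neq l$) to a comparison of the arguments $\overline{u}^l-\sum_{p\neq l}\overline{u}^p$ and $\overline{u}^j-\sum_{p\neq j}\overline{u}^p$ via their nonpositive sum, and the same use of the odd extension and monotonicity of $H$ together with the equality case for the positive component. (Like the paper, you only genuinely treat interior vertices, which is all that is ever used later.)
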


\begin{proof}
	Let $u^l(x)$ is a fixed density. Assume for some $x_0\in G^o$ we have $u^l(x_0) > 0.$ Then according to Lemma \ref{lemma_disj} for every $p\neq l$ we obtain $u^p(x_0)=0.$ Hence $\hat{u}^l(x_0)={u}^l(x_0)$ and this in turn gives
	\begin{align*}
		H\left(x_0, \overline{u}^l(x_0) - \sum\limits_{p \neq l}  \overline{u}^p(x_0)\right)- \hat{u}^l(x_0)&=
		H\left(x_0, \overline{u}^l(x_0) - \sum\limits_{p \neq l}  \overline{u}^p(x_0)\right)-{u}^l(x_0)\\
		&=f_{l}(x_0,u^{l}(x_0)).
	\end{align*}
	Now let for some $x_0\in G^o$ we have $u^l(x_0)=0.$ In this case if all $u^p(x_0)=0$ for every $p=1,2,...m,$ then the required inequality follows from the system \eqref{scheme_sys}. Assume there exist some $t\neq l$ such that $u^t(x_0) > 0.$ In this case recalling again the system \eqref{scheme_sys} and that for $s\leq 0$ we extend $H$  function as $H(x,s)=-H(x,-s),$ we obtain:
	\begin{multline*}
		H\left(x_0, \overline{u}^l(x_0) - \sum\limits_{p \neq l}  \overline{u}^p(x_0)\right)- \hat{u}^l(x_0) = H\left(x_0, \overline{u}^l(x_0) - \sum\limits_{p \neq l}  \overline{u}^p(x_0)\right)+ {u}^t(x_0)=\\=
		H\left(x_0, \overline{u}^l(x_0) - \sum\limits_{p \neq l}  \overline{u}^p(x_0)\right)+H\left(x_0, \overline{u}^t(x_0) - \sum\limits_{p \neq t}  \overline{u}^p(x_0)\right) -f_{t}(x_0,u^{t}(x_0))=\\=
		H\left(x_0, \overline{u}^t(x_0) - \sum\limits_{p \neq t}  \overline{u}^p(x_0)\right)-H\left(x_0,  \sum\limits_{p \neq l}  \overline{u}^p(x_0)-\overline{u}^l(x_0) \right) -f_{t}(x_0,u^{t}(x_0)).
	\end{multline*}
	
	It is easy to see that
	\[
	0\leq \overline{u}^t(x_0) - \sum\limits_{p \neq t}  \overline{u}^p(x_0)\leq \sum\limits_{p \neq l}  \overline{u}^p(x_0)-\overline{u}^l(x_0),
	\]
	hence by the non-decreasing property of $H$ one implies
	
	\begin{multline*}
		H\left(x_0, \overline{u}^t(x_0) - \sum\limits_{p \neq t}  \overline{u}^p(x_0)\right)-H\left(x_0,  \sum\limits_{p \neq l}  \overline{u}^p(x_0)-\overline{u}^l(x_0) \right) -f_{t}(x_0,u^{t}(x_0))\leq\\\leq -f_{t}(x_0,u^{t}(x_0))\leq 0\leq f_{l}(x_0,u^{l}(x_0)).
	\end{multline*}
	This completes the proof.
	
\end{proof}

\begin{lemma}\label{lemma1}
	Let the functions $f_{l}(x,s)$ and $H(x,s)$ be nondecreasing with respect to the variable $s$. Assume also that $H$ is a Lipschitz continuous with respect to $s$ and Lipschitz constant is $1,$ i.e. $|H(\cdot,s)-H(\cdot,t)|\leq |s-t|, \forall s,t\in\mathbb{R}^+.$  If any two vectors $(u^{1},u^{2},\dots,u^{m})$ and $(v^{1},v^{2},\dots,v^{m})$ are satisfying the discrete system \eqref{scheme_sys}, then the following equation holds:
	\[
	\max_{x\in V}\left(\hat{u}^l(x)-\hat{v}^l(x)\right)=\max_{\{ x\in V\;:\; u^l(x)\leq v^l(x)\}}\left(\hat{u}^l(x)-\hat{v}^l(x)\right),
	\]
	for all $l=1,2,\dots,m$.
\end{lemma}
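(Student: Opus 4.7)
The plan is to argue by contradiction, running a discrete maximum--principle and propagation argument on $G$ in which the Lipschitz hypothesis on $H$ with constant exactly $1$ does the decisive work. Suppose the asserted equality fails; then every maximizer $x^{*}$ of $\hat{u}^{l}-\hat{v}^{l}$ on $V$ must satisfy $u^{l}(x^{*})>v^{l}(x^{*})$. Because $u^{l}=v^{l}=\phi^{l}$ on $\partial G$, and the analogous equality holds for every index, $\hat{u}^{l}-\hat{v}^{l}$ vanishes on $\partial G$; in particular no maximizer is a boundary vertex, so I can fix a maximizer $x_{0}\in G^{o}$. Since $u^{l}(x_{0})>0$, Lemma \ref{lemma_disj} gives $u^{p}(x_{0})=0$ for every $p\neq l$, and hence $\hat{u}^{l}(x_{0})=u^{l}(x_{0})$.

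The central comparison is produced by combining the equation from \eqref{scheme_sys} for $u^{l}$ at $x_{0}$, namely $u^{l}(x_{0})=H(x_{0},s_{1})-f_{l}(x_{0},u^{l}(x_{0}))$ with $s_{1}:=\overline{\hat{u}^{l}}(x_{0})$, with the inequality of Lemma \ref{lemma_ineq} applied to the second solution, $\hat{v}^{l}(x_{0})\geq H(x_{0},s_{2})-f_{l}(x_{0},v^{l}(x_{0}))$ with $s_{2}:=\overline{\hat{v}^{l}}(x_{0})$. Subtracting and using that $f_{l}$ is non-decreasing together with $u^{l}(x_{0})>v^{l}(x_{0})$ to drop the $f_{l}$ contribution gives
\begin{equation*}
\hat{u}^{l}(x_{0})-\hat{v}^{l}(x_{0}) \;\leq\; H(x_{0},s_{1})-H(x_{0},s_{2}).
\end{equation*}
The subcase $s_{1}<s_{2}$ is excluded immediately: monotonicity of $H$ makes the right-hand side non-positive, while the left-hand side equals $u^{l}(x_{0})-\hat{v}^{l}(x_{0})\geq u^{l}(x_{0})-v^{l}(x_{0})>0$ because $\hat{v}^{l}(x_{0})\leq v^{l}(x_{0})$. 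Hence $s_{1}\geq s_{2}$, and now the Lipschitz constant $1$ upgrades the bound to the discrete sub-mean-value inequality $\hat{u}^{l}(x_{0})-\hat{v}^{l}(x_{0})\leq s_{1}-s_{2}=\overline{\hat{u}^{l}-\hat{v}^{l}}(x_{0})$.

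Because $x_{0}$ is a global maximizer, the neighbour average on the right is in turn bounded above by $\hat{u}^{l}(x_{0})-\hat{v}^{l}(x_{0})$, forcing equality; consequently every neighbor $y$ of $x_{0}$ is itself a maximizer. Each such $y$ must be either a boundary vertex (in which case $u^{l}(y)=v^{l}(y)$, contradicting the standing assumption), or an interior vertex with $u^{l}(y)\leq v^{l}(y)$ (immediate contradiction), or an interior vertex with $u^{l}(y)>v^{l}(y)$, to which the same argument applies. Since $G$ is finite and connected and $\partial G$ is reachable along such a chain of neighbours, the process must terminate in a contradiction, proving the lemma. I expect the sign analysis that excludes $s_{1}<s_{2}$ to be the only delicate point; everything else is standard discrete maximum-principle bookkeeping, with the Lipschitz constant being exactly $1$ being precisely what allows the $H$-difference to be traded for the neighbour-average difference.
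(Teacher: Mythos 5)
Your proposal is correct and follows essentially the same route as the paper's own proof: combine the equation for $u^{l}$ at an interior maximizer where $u^{l}>v^{l}$ with the inequality of Lemma \ref{lemma_ineq} for $v^{l}$, drop the $f_{l}$ terms by monotonicity, use the Lipschitz-$1$ bound on $H$ to obtain the discrete sub-mean-value inequality, and propagate the maximum along a path to $\partial G$ where $\hat{u}^{l}-\hat{v}^{l}$ vanishes. Your explicit exclusion of the subcase $s_{1}<s_{2}$ is a small but welcome tightening of a step the paper leaves implicit.
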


\begin{proof}
	We argue by contradiction. Suppose for some $l_0$ we have
	\begin{equation}\label{init_assmp}
		\begin{multlined}
			\hat{u}^{l_0}(x_0)-\hat{v}^{l_0}(x_0)=
			\max_{x\in  V}(\hat{u}^{l_0}(x)-\hat{v}^{l_0}(x))=\\=
			\max_{\{ x\in V\;:\; u^{l_0}(x)> v^{l_0}(x)\}}(\hat{u}^{l_0}(x)-\hat{v}^{l_0}(x))>
			\max_{\{ x\in V\;:\; u^{l_0}(x)\leq v^{l_0}(x)\}}(\hat{u}_\alpha^{l_0}(x)-\hat{v}_\alpha^{l_0}(x)).
		\end{multlined}
	\end{equation}
	
	Then according to Lemma \ref{lemma_disj} the following simple chain of inclusions hold:
	\begin{equation}\label{incl_chain}
		\{u^l(x)> v^l(x)\}\subset\{\hat{u}^l(x)> \hat{v}^l(x)\}\subset\{u^l(x)\geq v^l(x)\}.
	\end{equation}
	We obviously see that $ u^{l_0}(x_0)> v^{l_0}(x_0)\geq 0 $ implies
	$\hat{u}^{l_0}(x_0)>\hat{v}^{l_0}(x_0)$. On the other hand,  the discrete system \eqref{scheme_sys} and Lemma \ref{lemma_ineq} gives us
	$$
	H\left(x_0, \overline{u}^{l_0}(x_0) - \sum\limits_{p \neq l_0}  \overline{u}^p(x_0)\right)- \hat{u}^{l_0}(x_0)= f_{l_0}(x_0,u^{l_0}(x_0)),
	$$
	and	
	$$
	H\left(x_0, \overline{v}^{l_0}(x_0) - \sum\limits_{p \neq l_0}  \overline{v}^p(x_0)\right)- \hat{v}^{l_0}(x_0)\leq f_{l_0}(x_0,v^{l_0}(x_0)).	 	
	$$
	Therefore 	
	\begin{equation*}
		\begin{multlined}
			H\left(x_0, \overline{u}^{l_0}(x_0) - \sum\limits_{p \neq l_0}  \overline{u}^p(x_0)\right)- H\left(x_0, \overline{v}^{l_0}(x_0) - \sum\limits_{p \neq l_0}  \overline{v}^p(x_0)\right)\\ -\left(\hat{u}^{l_0}(x_0)- \hat{v}^{l_0}(x_0)\right)	\geq f_{l_0}(x_0,u^{l_0}(x_0))-f_{l_0}(x_0,v^{l_0}(x_0))\geq 0.
		\end{multlined}
	\end{equation*}
	Thus using non-decreasing and Lipschitz properties of $H$, we conclude
	
	\begin{align}
		0  < \left(\hat{u}^{l_0}(x_0)- \hat{v}^{l_0}(x_0)\right)&\leq
		H\left(x_0, \overline{u}^{l_0}(x_0) - \sum\limits_{p \neq l_0}  \overline{u}^p(x_0)\right)- H\left(x_0, \overline{v}^{l_0}(x_0) - \sum\limits_{p \neq l_0}  \overline{v}^p(x_0)\right)\label{comp-principle}\\&\leq \left(\overline{u}^{l_0}(x_0) - \sum\limits_{p \neq l_0}  \overline{u}^p(x_0)\right) -\left(\overline{v}^{l_0}(x_0) - \sum\limits_{p \neq l_0}  \overline{v}^p(x_0)\right)\nonumber \\&= \frac{1}{\deg(x_0)}\sum_{(x_0,y)\in E}\left(\hat{u}^{l_0}(y)-\hat{u}^{l_0}(y)\right)\nonumber,
	\end{align}
	which implies that $\hat{u}^{l_0}(x_0)-\hat{v}^{l_0}(x_0)=\hat{u}^{l_0}(y)-\hat{v}^{l_0}(y)>0,$ for all $(x_0,y)\in E.$ Due to the chain  \eqref{incl_chain}, we apparently have  ${u}^{l_0}(y)\geq {v}^{l_0}(y)$. According to our assumption \eqref{init_assmp}, the only possibility is ${u}^{l_0}(y)>{v}^{l_0}(y)$  for all $(x_0,y)\in E.$ Now we can proceed the previous steps for all $y\in V$ such that $(x_0,y)\in E,$ and then for each one we will get corresponding neighbours with the same strict inequality and so on. Since the graph $G$ is connected, then  one can always find a path from a given vertex $y$ to the vertex belonging $\partial G.$   Continuing above procedure along this path we will finally approach to the boundary $\partial G$ where as we know ${u}^{l_0}(x)={v}^{l_0}(x)=\phi^{l_0}(x)$ for all $x\in\partial G$. Hence, the strict inequality  fails, which implies that our initial assumption \eqref{init_assmp} is false. Observe that the same arguments can be applied if we interchange the roles of ${u}^{l}(x)$ and ${v}^{l}(x)$. Thus, we also have
	\[
	\max_{G}\left(\hat{v}^l(x)-\hat{u}^l(x)\right)=\max_{\{ v^l(x)\leq u^l(x)\}}\left(\hat{v}^l(x)-\hat{u}^l(x)\right),
	\]
	for every $l=1,2,\dots,m$.

	%	It is easy to see that $\max\limits_{\mathcal N}(\hat{v}_\alpha^l-\hat{u}_\alpha^l)=-\min\limits_{\mathcal N}(\hat{u}_\alpha^l-\hat{v}_\alpha^l)$.
	Particularly, for every fixed $l=1,2.\dots,m$ and $x\in V$ we have	\begin{multline}\label{double_ineq}
		-\max_{\{v^l(x)\leq u^l(x)\}}(\hat{v}^l(x)-\hat{u}^l(x))=	-\max\limits_{x\in V}(\hat{v}^l(x)-\hat{u}^l(x))\leq\\\leq \hat{u}^l(x)-\hat{v}^l(x) \leq 	\max\limits_{x\in V}(\hat{u}^l(x)-\hat{v}^l(x))=\max_{\{u^l(x)\leq v^l(x)\}}(\hat{u}^l(x)-\hat{v}^l(x)).
	\end{multline}
\end{proof}
Thanks to Lemma \ref{lemma1} in the sequel we will use the following notations:
$$
A:=\max_l\;\left(\max\limits_{x\in V}(\hat{u}^l(x)-\hat{v}^l(x))\right)=\max_l\;\left(\max\limits_{\{u^l(x)\leq v^l(x)\}}(\hat{u}^l(x)-\hat{v}^l(x))\right),
$$
and
$$
B:=\max_l\;\left(\max\limits_{x\in V}(\hat{v}^l(x)-\hat{u}^l(x))\right)=\max_l\;\left(\max\limits_{\{v^l(x)\leq u^l(x)\}}(\hat{v}^l(x)-\hat{u}^l(x))\right).
$$

\begin{lemma}\label{lemma2}
	Let the functions $f_{l}(x,s)$ and $H(x,s)$ be nondecreasing with respect to the variable $s$. Assume also that $H$ is a Lipschitz continuous with respect to $s$ and Lipschitz constant is $1,$ i.e. $|H(\cdot,s)-H(\cdot,t)|\leq |s-t|, \forall s,t\in\mathbb{R}^+.$ Let two vectors $(u^{1},u^{2},\dots,u^{m})$ and $(v^{1},v^{2},\dots,v^{m})$  be solving the discrete system \eqref{scheme_sys}. For them  we set $A$ and $B$ as defined above. If $A>0$ and it is attained for some $l_0$, then $A=B>0$  and there exists some $t_0\neq l_0,$ and $y_0\in V,$ such that
	$$
	0<A=\max_{\{u^{l_0}(x)\leq v^{l_0}(x)\}}(\hat{u}^{l_0}(x)-\hat{v}^{l_0}(x))=
	\max_{\{u^{l_0}(x)= v^{l_0}(x)=0\}}(\hat{u}^{l_0}(x)-\hat{v}^{l_0}(x))
	=\hat{v}^{t_0}(y_0)-\hat{u}^{t_0}(y_0).
	$$
	
\end{lemma}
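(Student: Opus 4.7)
The plan is to exploit Lemma~\ref{lemma1} to localise a maximiser $x_0$ of $\hat{u}^{l_0}-\hat{v}^{l_0}$ inside $\{u^{l_0}\le v^{l_0}\}$, and then to use Lemma~\ref{lemma_disj} twice: once to force $u^{l_0}(x_0)=v^{l_0}(x_0)=0$, and once to identify the index $t_0\neq l_0$ that carries the opposite difference $\hat{v}^{t_0}-\hat{u}^{t_0}$ at the same point $y_0=x_0$. Throughout, $x_0$ will denote a point attaining $A$, which by Lemma~\ref{lemma1} can be chosen in $\{u^{l_0}\le v^{l_0}\}$.

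First I would show $v^{l_0}(x_0)=0$ by contradiction: if $v^{l_0}(x_0)>0$, Lemma~\ref{lemma_disj} forces $v^p(x_0)=0$ for every $p\neq l_0$, so $\hat{v}^{l_0}(x_0)=v^{l_0}(x_0)$; on the other hand $\hat{u}^{l_0}(x_0)=u^{l_0}(x_0)-\sum_{p\neq l_0}u^p(x_0)\le u^{l_0}(x_0)\le v^{l_0}(x_0)$, forcing $A\le 0$, a contradiction. Combined with $0\le u^{l_0}(x_0)\le v^{l_0}(x_0)$, we obtain $u^{l_0}(x_0)=v^{l_0}(x_0)=0$, which already settles the middle equality in the displayed chain. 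At such a point $A=\sum_{p\neq l_0}\bigl(v^p(x_0)-u^p(x_0)\bigr)$, and Lemma~\ref{lemma_disj} applied to $u$ and to $v$ separately allows at most one positive component in each family, say $u^{p_0}(x_0)$ and $v^{q_0}(x_0)$ with $p_0,q_0\neq l_0$ whenever they exist. A short enumeration of the four possibilities (both zero; only $u^{p_0}>0$; only $v^{q_0}>0$ or $p_0=q_0=:t_0$; both positive with $p_0\neq q_0$) shows that the first two contradict $A>0$, the third gives $\hat{v}^{t_0}(x_0)-\hat{u}^{t_0}(x_0)=A$ (with $t_0:=q_0$ in the single-index subcase), hence $B\ge A$, and the fourth gives $\hat{v}^{q_0}(x_0)-\hat{u}^{q_0}(x_0)=v^{q_0}(x_0)+u^{p_0}(x_0)>A$, hence $B>A$.

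The main obstacle is ruling out this fourth case. My plan is to rerun the preceding argument symmetrically on $B$: since $B\ge A>0$ in either sub-scenario, the same two steps with $u$ and $v$ swapped show that any $B$-maximiser must lie on its own joint zero set, and the analogous four-way split produces either $A\ge B$ (from its first three cases) or $A>B$ (from its fourth). In the situation $B>A$ just produced, both alternatives are contradictions, so the fourth case for $A$ is impossible. We are therefore left with the third case, which supplies an index $t_0\neq l_0$ and a point $y_0=x_0$ with $\hat{v}^{t_0}(y_0)-\hat{u}^{t_0}(y_0)=A$, while the combined inequalities $B\ge A$ and (by symmetry) $A\ge B$ force $A=B>0$, completing the proof.
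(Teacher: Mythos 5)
Your proof is correct and follows essentially the same route as the paper: use Lemma~\ref{lemma1} and disjointness to place a maximiser of $A$ on the set $\{u^{l_0}=v^{l_0}=0\}$, identify an index $t_0\neq l_0$ with $v^{t_0}(y_0)>0$ carrying the difference, and squeeze $A\le\hat{v}^{t_0}(y_0)-\hat{u}^{t_0}(y_0)\le B$ against the symmetric inequality $B\le A$. Your explicit four-case enumeration (and the separate elimination of the ``both positive with $p_0\neq q_0$'' case via the symmetric argument on $B$) is just an unpacked version of the paper's single inequality chain \eqref{A=B}, which becomes an equality once $A=B$ is known.
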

\begin{proof}
	Due to disjointness property of densities $u^l$ and $v^l$ (see Lemma \ref{lemma_disj}) We easily observe that  $(\hat{u}^{l_0}(x)-\hat{v}^{l_0}(x))$ might be positive only on the set $\{u^{l_0}(x)= v^{l_0}(x)=0\}$ (because for the other cases simple checking provides $(\hat{u}^{l_0}(x)-\hat{v}^{l_0}(x))\leq 0)$. Hence,
	$$
	A=\max_{\{u^{l_0}(x)= v^{l_0}(x)=0\}}(\hat{u}^{l_0}(x)-\hat{v}^{l_0}(x)).
	$$
	Using the latter equality, one can prove that $A>0$ implies $B>0$.  Indeed, if we assume that $B\leq 0,$ then according to definition of $B$ we will get that $\hat{v}^{l}(x)\leq\hat{u}^{l}(x)$ for all $l=\overline{1,m}$ and $x\in V$. This obviously yields ${v}^{l}(x)\leq {u}^{l}(x),$ for all $l=\overline{1,m}$ and $x\in V$.  Thus,
	$$
	0<A=\max_{\{u^{l_0}(x)= v^{l_0}(x)=0\}}(\hat{u}^{l_0}(x)-\hat{v}^{l_0}(x))=
	\max_{\{u^{l_0}(x)= v^{l_0}(x)=0\}}\left(
	\sum_{l\neq l_0}(v^{l}(x)-u^{l}(x))\right)\leq 0.
	$$
	This is a contradiction, and therefore $B>0$. It is clear that in a similar way one can prove the converse statement as well. Thus, we clearly see that at the same time either both $A$ and $B$ are non-positive, or they are positive.

	Concerning the equality $A=B,$ it is easy to see that if the maximum $A>0$ is attained at vertex  $y_0\in V,$ then the following holds:
	\begin{equation*}
		0<A=\max_{\{u^{l_0}(x)= v^{l_0}(x)=0\}}(\hat{u}^{l_0}(x)-\hat{v}^{l_0}(x))=
		\hat{u}^{l_0}(y_0)-\hat{v}^{l_0}(y_0)=\sum_{l\neq l_0}(v^l(y_0)-u^l(y_0)).
	\end{equation*}
	The above result implies that $\sum_{l\neq l_0}v^l(y_0)$ is strictly positive,  therefore there exists  $t_0\neq l_0$ such that ${v}^{t_0}(y_0)>0,$ which in turn allows to write ${v}^{t_0}(y_0)= \hat{v}^{t_0}(y_0)$. Hence,
	\begin{equation}\label{A=B}
		0<A={v}^{t_0}(y_0)-\sum_{l\neq l_0}{u}^{l}(y_0)\leq\hat{v}^{t_0}(y_0)-u^{t_0}(y_0)+\sum_{l\neq t_0}u^l(y_0)= \hat{v}^{t_0}(y_0)-\hat{u}^{t_0}(y_0)\leq B.
	\end{equation}
	In the same way we will obtain that $B\leq A,$ and therefore $A=B$. On the other hand, since $A=B,$ then the following obvious equality holds
	$$
	A={v}^{t_0}(y_0)-\sum_{l\neq l_0}{u}^{l}(y_0)= \hat{v}^{t_0}(y_0)-\hat{u}^{t_0}(y_0)=B.
	$$
	This completes the proof.
	%This leads to $2\sum\limits_{l\neq t_0}{u}_{\delta_0}^{l}\leq 0,$ and therefore
	%${u}_{\delta_0}^{l}=0,$ for all $l\neq t_0$. Hence,
	%$$M={v}_{\delta_0}^{t_0}-\sum_{l\neq l_0}{u}_{\delta_0}^{l}={v}_{\delta_0}^{t_0}-{u}_{\delta_0}^{t_0}.
	%$$
\end{proof}

\section{Proof of Theorem 1}
\begin{proof}
	
	Let two vectors $(u^{1},u^{2},\dots,u^{m})$ and $(v^{1},v^{2},\dots,v^{m})$  be solving the discrete system \eqref{scheme_sys}. For these vectors we set the definition of $A$ and $B$. Then, we consider two cases $A\leq 0$ and $A>0$. If we assume that $A\leq 0,$ then according to Lemma \ref{lemma2}, we get  $B\leq 0$. But if  $A$ and $B$ are non-positive, then the uniqueness follows. Indeed,  due to  \eqref{double_ineq} we have the following obvious inequalities
	$$
	0\leq -B\leq \hat{u}^l(x)-\hat{v}^l(x)\leq A\leq 0.
	$$
	This provides  for every $l=\overline{1,m}$ and $x\in V$ we have $\hat{u}^l(x)=\hat{v}^l(x),$ which in turn implies $$
	{u}^l(x)={v}^l(x).
	$$
	Now suppose  $A>0$. Our aim is to prove that this case leads to a contradiction. Let the value $A$ is attained for some $l_0\in\overline{1,m},$ then
	due to Lemma \ref{lemma2} there exist $y_0\in V$ and $t_0\neq l_0$ such that:
	\begin{align*}
		0<A=B=&\max_{\{u^{l_0}(x)\leq v^{l_0}(x)\}}(\hat{u}^{l_0}(x)-\hat{v}^{l_0}(x))\\&=\max_{\{u^{l_0}(x)=v^{l_0}(x)=0\}}(\hat{u}^{l_0}(x)-\hat{v}^{l_0}(x))=\hat{v}^{t_0}(y_0)-\hat{u}^{t_0}(y_0).
	\end{align*}
	According to Lemma \ref{lemma_ineq} we have
	$$
	H\left(y_0, \overline{v}^{t_0}(y_0) - \sum\limits_{p \neq t_0}  \overline{v}^p(y_0)\right)- \hat{v}^{t_0}(y_0)= f_{t_0}(y_0,v^{t_0}(y_0)),
	$$
	and	
	$$
	H\left(y_0, \overline{u}^{t_0}(y_0) - \sum\limits_{p \neq t_0}  \overline{u}^p(y_0)\right)- \hat{u}^{t_0}(y_0)\leq f_{t_0}(y_0,u^{t_0}(y_0)).	 	
	$$
	
	Recalling  that $f_{l}(x,s)$ and $H(x,s)$ are non-decreasing with respect to the variable $s$, also $H$ is a Lipschitz continuous w.r.t. $s,$ along with the fact that $\hat{v}^{t_0}(y_0)>\hat{u}^{t_0}(y_0)$ implies ${v}^{t_0}(y_0)\geq {u}^{t_0}(y_0),$ we can repeat the same steps as in \eqref{comp-principle} to obtain
	$$
	\left(\hat{v}^{t_0}(y_0)- \hat{u}^{t_0}(y_0)\right)\leq \frac{1}{\deg(y_0)}\sum_{(y_0,z)\in E}\left(\hat{v}^{t_0}(z)-\hat{u}^{t_0}(z)\right).
	$$
	This  implies  $A=\hat{v}^{t_0}(y_0)-\hat{u}^{t_0}(y_0)=\hat{v}^{t_0}(z)-\hat{u}^{t_0}(z)>0$ for all $(y_0,z)\in E$.  The chain \eqref{incl_chain} provides that  for all $(y_0,z)\in E,$ we have ${v}^{t_0}(z)\geq{u}^{t_0}(z)$. Since a graph $G$ is connected, then one can always find a path from $y_0$ to some boundary vertex $w\in\partial G.$ Assume the vertices along this path are $y_0;y_1;\dots;y_{k-1};y_k=w.$ Hence, for every $0 \leq j\leq k-1,$ we have $(y_j,y_{j+1})\in E,$ i.e. every vertex  $y_{j+1}$ is a closest neighbor for $y_{j}$ and $y_{j+2}.$

	According to the above arguments for the neighbor vertex $y_1\in V$ we proceed as follows: If  ${v}^{t_0}(y_1)>{u}^{t_0}(y_1),$ then obviously
	$$
	\left(\hat{v}^{t_0}(y_1)- \hat{u}^{t_0}(y_1)\right)\leq \frac{1}{\deg(y_1)}\sum_{(y_1,z)\in E}\left(\hat{v}^{t_0}(z)-\hat{u}^{t_0}(z)\right).
	$$
	This, as we saw a few lines above, leads to  $A=\hat{v}^{t_0}(y_1)-\hat{u}^{t_0}(y_1)=\hat{v}^{t_0}(z)-\hat{u}^{t_0}(z)>0$ for all $(y_1,z)\in E$.  In particular,
	$A = \hat{v}^{t_0}(y_2)-\hat{u}^{t_0}(y_2)> 0.$
	
	If  ${v}^{t_0}(y_1)={u}^{t_0}(y_1),$  then due to $\hat{v}^{t_0}(y_1)-\hat{u}^{t_0}(y_1)=A=B>0,$ there exists some $\lambda_0\neq t_0,$ such that
	$$
	0<A=\hat{v}^{t_0}(y_1)-\hat{u}^{t_0}(y_1)=
	\sum_{l\neq t_0}\left({u}^{l}(y_1)-{v}^{l}(y_1) \right)={u}^{\lambda_0}(y_1)-\sum_{l\neq t_0}{v}^{l}(y_1).
	$$
	Note that ${u}^{\lambda_0}(y_1)>0$ implies ${u}^{l}(y_1)=0$ for all $l\neq \lambda_0,$ and particularly  ${v}^{t_0}(y_1)={u}^{t_0}(y_1)=0.$
	Following the definition of $A,$ we get
	$$
	A={u}^{\lambda_0}(y_1)-\sum_{l\neq t_0}{v}^{l}(y_1)\geq \hat{u}^{\lambda_0}(y_1)-\hat{v}^{\lambda_0}(y_1),
	$$
	which in turn gives $2\sum\limits_{l\neq \lambda_0}{v}^{l}(y_1)\leq 0,$ and therefore
	${v}^{l}(y_1)=0$ for all $l\neq \lambda_0$. Hence
	$$
	A={u}^{\lambda_0}(y_1)-\sum_{l\neq t_0}{v}^{l}(y_1)= \hat{u}^{\lambda_0}(y_1)-\hat{v}^{\lambda_0}(y_1),
	$$
	This suggests us to apply the same approach as above to arrive at
	$$
	\left(\hat{v}^{\lambda_0}(y_1)- \hat{u}^{\lambda_0}(y_1)\right)\leq \frac{1}{\deg(y_1)}\sum_{(y_1,z)\in E}\left(\hat{v}^{\lambda_0}(z)-\hat{u}^{\lambda_0}(z)\right),
	$$
	which leads to
	$A=\hat{u}^{\lambda_0}(y_1)-\hat{v}^{\lambda_0}(y_1)=\hat{u}^{\lambda_0}(z)-\hat{v}^{\lambda_0}(z)>0,$ for all $(y_1,z)\in E$. In particular,
	$A = \hat{u}^{\lambda_0}(y_2)-\hat{v}^{\lambda_0}(y_2)> 0.$
	Thus, combining two cases we observe that for $y_2\in V$ there exist an index $1\leq l_{y_2}\leq m$ (in our case $l_{y_2}=t_0$ or $l_{y_2}=\lambda_0$) such that
	\begin{equation}\label{contradiction}
		\mbox{either}\;\;	\hat{u}^{l_{y_2}}(y_2)-\hat{v}^{l_{y_2}}(y_2)=A, \;\;\mbox{or}\;\;
		\hat{u}^{l_{y_2}}(y_2)-\hat{v}^{l_{y_2}}(y_2)=-A.
	\end{equation}
	It is not hard to understand that the same procedure can be repeated for a vertex $y_2$ instead of $y_1$ and come to the same conclusion \eqref{contradiction} for $y_3\in V$ and some index $l_{y_3}$ and so on. This allows to claim that for every $y_j\in V$ along the path $(y_0,\dots,y_k)$  there exist some $l_{y_j}$ such that
	\[
	|\hat{u}^{l_{y_j}}(y_j)-\hat{v}^{l_{y_j}}(y_j)|=A>0.
	\]
	But this means that above equality holds also for $y_k=w\in \partial G,$ which will lead to a contradiction, because for every $z\in\partial G,$ and $l=\overline{1,m}$ one has $\hat{u}^{l}(z)-\hat{v}^{l}(z)=0$.  This completes the proof.
\end{proof}
Thus,  the system \eqref{scheme_sys} has a  unique solution. Hence, one can think of constructing an iterative method which will converge to the unique solution. To this aim, in the forthcoming work the authors are going to develop the efficient iterative methods to solve the systems of type \eqref{scheme_sys}.

\section*{Acknowledgments}
F. Bozorgnia was  supported by   the Portuguese National Science Foundation through FCT fellowships   and    by European Unions Horizon 2020 research and innovation program under Marie Skłodowska-Curie grant agreement No. 777826 (NoMADS).

\bibliographystyle{plain}%
\bibliography{library}%\label{sec-Ref}

\end{document}